\pgfplotsset{compat=1.14}
\newtheorem{theorem}{Theorem}[section]
\newtheorem{corollary}[theorem]{Corollary}
\theoremstyle{definition}
\newtheorem{defn}[theorem]{Definition}
\theoremstyle{remark}
\newcommand{\id}{\textrm{id}}
\title[On an indefinite metric on a Riemannian manifold]{Spheres and circles with respect to an indefinite metric on a Riemannian manifold with circulant structures}
\begin{document}
\author{Georgi Dzhelepov}
\address{Agricultural University of Plovdiv\\ Department of Mathematics, Informatics and Physics\\ 12 Mendeleev Blvd., 4000 Plovdiv, Bulgaria}
\email{dzhelepov@au-plovdiv.bg}

\begin{abstract}
We consider a $3$-dimensional differentiable manifold with two circulant structures -- a Riemannian metric and an additional structure, whose third power is the identity. The structure is compatible with the metric such that an isometry is induced in any tangent space of the manifold. Further, we consider an associated metric with the Riemannian metric, which is necessary indefinite. We find equations of a sphere and of a circle, which are given in terms of the associated metric, with respect to the Riemannian metric.\end{abstract}

\keywords{Riemannian manifold, circulant matrix, timelike, spacelike}
\subjclass[2010]{Primary: 53B30,
53B20, Secondary: 15B05, 51N20}

\maketitle

\section{Introduction}
\label{intro}

The study of Riemannian manifolds with additional structures is very substantial topic in modern differential geometry. Some of the manifolds are equipped with a structure, which satisfies an equation of third power (for example \cite{22}, \cite{dok}, \cite{Puj} and \cite{Yano}).

 The models of a sphere and the relations between spheres and hyperboloids, between spheres and cones are of certain interest in pseudo-Rie\-mannian geometry. Another current problem is the obtaining of the correspondence between a circle and other quadratic curves (for instance  \cite{4}, \cite{6}, \cite{8}, \cite{2}, \cite{10} and \cite{12}).

The object of the present paper is a consideration of a $3$-dimensional differentiable manifold $M$ equipped with two circulant structures -- a Riemannian metric $g$ and a tensor $q$ of type $(1, 1)$, whose third power is the identity and $q$ acts as an isometry on $g$, i.e. $(M, g, q)$. We investigate an associated metric $f$,  which is introduced in \cite{3}. The metric $f$ is necessary indefinite and it determines spacelike vectors, null vectors and timelike vectors in the tangent space $T_{p}M$, $p\in M$.

The paper is organized as follows. In Sect.~\ref{sec:2}, we recall necessary facts about the $3$-dimensional Riemannian manifold $(M, g, q)$ and about a $q$-basis of $T_{p}M$. In Sect.~\ref{sec:3}, we consider the properties of the associated metric $f$ on $(M, g, q)$. In Sect.~\ref{sec:4}, we obtain equations of spheres, which are given in terms of $f$, with respect to $g$. Also we establish that every vector from an othonormal $q$-basis of $T_{p}M$ is a null one with respect to $f$. In Sect.~\ref{sec:5}, we find equations of circles, which are given in terms of $f$,  with respect to $g$.

\section{Preliminaries}\label{sec:2}
 Let $M$ be a $3$-dimensional differentiable manifold equipped with a tensor $q$ of type $(1, 1)$, which acts as an isometry in every tangent space $T_{p}M$, $p\in M$. Let the coordinates of $q$ with respect to a local coordinate system form the circulant matrix
 \begin{equation}\label{f1}
    q=\begin{pmatrix}
      0 & 1 & 0 \\
      0 & 0 & 1 \\
      1 & 0 & 0 \\
    \end{pmatrix}.
\end{equation}
Due to \eqref{f1} we have
\begin{equation}\label{q3}
    q^{3}= \id.
\end{equation}
Let $g$ be a positive definite metric on $M$ such that
\begin{equation}\label{2.12}
     g(qu, qv)=g(u,v).
\end{equation}
In the latter equality and further, $u, v, w$ will stand for arbitrary elements of the algebra on the smooth vector fields on $M$ or vectors in $T_{p}M$.

Equalities \eqref{f1} and \eqref{2.12} imply that the local coordinates of $g$ form a circulant matrix.
We denote by $(M, g, q)$ the manifold $M$ equipped with the metric $g$ and the structure $q$, which are defined by (\ref{f1}) and (\ref{2.12}).

It is well known that the norm of a vector $u$ is
   \begin{equation}\label{size}
   \|u\|=\sqrt{g(u, u)},
   \end{equation}
and for the angle $\varphi=\angle(u, qu)$ we have
  \begin{equation}\label{cos}\cos\varphi=\frac{g(u, qu)}{g(u, u)}\ .\end{equation}

In \cite{1},  for $(M, g, q)$ it is verified that $\varphi$ is in $\big[0,\frac{2\pi}{3}\big]$.
  If $\varphi\in \big(0,\frac{2\pi}{3}\big)$ and $u\in T_{p}M$, then $\{u, qu, q^{2}u
\}$ form a basis, which is called a $q$-basis. There exists an orthonormal $q$-basis of $T_{p}M$.

\section{Properties of the associated metric $f$}\label{sec:3}

The associated metric $f$ on $(M, g, q)$, determined in \cite{3} by
    \begin{equation}\label{defF}
   f(u, v)=g(u, qv)+g(qu, v),
   \end{equation}
 is necessary indefinite.
Using \eqref{f1} and \eqref{2.12}, we establish that $f$ satisfies the following equalities:
\begin{equation}\label{ai}
  f(u, u)=2g(u, qu),
\end{equation}
\begin{equation}\label{bi}
  f(u, qu)=g(u, u)+g(u, qu).
\end{equation}
According to the physical terminology we give
\begin{defn}\label{D1}
Let $f$ be the associated metric on $(M,g,q)$. If a vector $u$ satisfies $f(u, u)>0$ (resp. $f(u, u)<0$), then $u$ is spacelike (resp. timelike).
If a nonzero vector $u$ satisfies $f(u, u)=0$, then $u$ is null.
\end{defn}
From \eqref{size}, \eqref{cos} and \eqref{ai} we get
$f(u, u)=2\|u\|^{2}\cos\varphi.$
Then, having in mind Definition~\ref{D1}, we obtain the following
\begin{theorem}\label{thmR}
Let $f$ be the associated metric on $(M,g,q)$. If $\varphi$ is the angle between $u$ and $qu$, then the following
propositions hold:
\begin{itemize}
  \item[(i)] $u$ is a spacelike vector if and only if $\varphi\in \big[0,\frac{\pi}{2}\big)$;
  \item[(ii)] $u$ is a null vector if and only if $\varphi=\frac{\pi}{2}$;
  \item[(iii)] $u$ is a timelike vector if and only if $\varphi\in \big(\frac{\pi}{2},\frac{2\pi}{3}\big)$.
\end{itemize}
\end{theorem}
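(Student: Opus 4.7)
The plan is to read the theorem directly off the identity $f(u,u)=2\|u\|^{2}\cos\varphi$, which is already indicated in the text just before the statement. First I would establish this identity carefully: starting from \eqref{ai}, namely $f(u,u)=2g(u,qu)$, I would substitute the expression for $g(u,qu)$ obtained by combining \eqref{size} and \eqref{cos}, which gives $g(u,qu)=\|u\|^{2}\cos\varphi$. Together this yields
\begin{equation*}
f(u,u)=2\|u\|^{2}\cos\varphi.
\end{equation*}

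Next I would record the sign analysis. Assuming $u$ is nonzero (the trichotomy in Definition~\ref{D1} only concerns nonzero vectors), positive-definiteness of $g$ gives $\|u\|^{2}>0$, so the sign of $f(u,u)$ is controlled entirely by the sign of $\cos\varphi$. From the preliminaries I may use $\varphi\in\bigl[0,\tfrac{2\pi}{3}\bigr]$, and on this interval $\cos\varphi>0$ precisely for $\varphi\in\bigl[0,\tfrac{\pi}{2}\bigr)$, $\cos\varphi=0$ precisely for $\varphi=\tfrac{\pi}{2}$, and $\cos\varphi<0$ precisely for $\varphi\in\bigl(\tfrac{\pi}{2},\tfrac{2\pi}{3}\bigr]$.

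Finally I would match these three cases against Definition~\ref{D1}: $f(u,u)>0$ corresponds to spacelike, $f(u,u)=0$ to null, and $f(u,u)<0$ to timelike. This establishes the forward implication in each of (i)--(iii), and since the three possibilities for $\varphi$ exhaust $\bigl[0,\tfrac{2\pi}{3}\bigr]$ and the three causal characters are mutually exclusive, the converse implications follow automatically.

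There is essentially no obstacle here; the only small point worth stating explicitly is why $\|u\|^{2}\neq 0$ (so that one may divide or at least read off signs cleanly), which reduces to the Riemannian assumption on $g$, and the fact that the case $\varphi=\tfrac{2\pi}{3}$ cannot occur for a nonzero $u$ because that would force $u$ and $qu$ to be antiparallel, contradicting the linear independence of $\{u,qu,q^{2}u\}$ used to define a $q$-basis; this justifies writing the timelike interval as the open interval $\bigl(\tfrac{\pi}{2},\tfrac{2\pi}{3}\bigr)$ in (iii).
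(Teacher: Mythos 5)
Your main argument is exactly the paper's: the paper proves this theorem in one line by combining \eqref{size}, \eqref{cos} and \eqref{ai} to get $f(u,u)=2\|u\|^{2}\cos\varphi$ and then reading off the sign of $\cos\varphi$ on $\big[0,\frac{2\pi}{3}\big]$ against Definition~\ref{D1}, which is what you do. One correction to your closing remark, though: at $\varphi=\frac{2\pi}{3}$ the vectors $u$ and $qu$ are \emph{not} antiparallel (that would require $\varphi=\pi$; here $\cos\varphi=-\frac12$), and this value is in fact attainable --- the paper treats $\varphi=\frac{2\pi}{3}$ explicitly in Section~5. The actual degeneracy at $\varphi=\frac{2\pi}{3}$ is that $\|u+qu+q^{2}u\|^{2}=3\|u\|^{2}+6g(u,qu)=0$, so $u+qu+q^{2}u=0$ and $\{u,qu,q^{2}u\}$ fails to be a $q$-basis; the exclusion of the endpoint in (iii) should be justified on those grounds (or simply inherited from the paper's convention that a $q$-basis requires $\varphi\in\big(0,\frac{2\pi}{3}\big)$), not by an appeal to antiparallelism.
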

Obviously, taking into account \eqref{q3} and \eqref{ai}, we have
\begin{corollary}
If $u$ is a spacelike (null or timelike) vector, then $qu$ and $q^{2}u$ are spacelike (null or timelike) vectors, respectively.
   \end{corollary}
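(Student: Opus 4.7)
The plan is to show the single identity $f(qu,qu)=f(q^{2}u,q^{2}u)=f(u,u)$; once established, the corollary is immediate from Definition~\ref{D1}, since the sign of $f(\cdot,\cdot)$ evaluated on $qu$ or $q^{2}u$ coincides with that on $u$ (and in particular the vanishing is preserved, which handles the null case, while $qu\neq 0$ and $q^{2}u\neq 0$ whenever $u\neq 0$ because $q$ is invertible by \eqref{q3}).

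For the first equality, I would start from \eqref{ai} applied to $qu$ in place of $u$, obtaining $f(qu,qu)=2g(qu,q(qu))=2g(qu,q^{2}u)$. I would then invoke the isometry property \eqref{2.12} with the pair $(u,qu)$ to rewrite $g(qu,q^{2}u)=g(q\cdot u,q\cdot qu)=g(u,qu)$. Comparing this with \eqref{ai} yields $f(qu,qu)=2g(u,qu)=f(u,u)$.

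For the second equality, again by \eqref{ai} I get $f(q^{2}u,q^{2}u)=2g(q^{2}u,q^{3}u)$, and \eqref{q3} collapses $q^{3}u$ to $u$, so $f(q^{2}u,q^{2}u)=2g(q^{2}u,u)$. Now I would apply \eqref{2.12} once more, this time to the pair $(q^{2}u,u)$, to get $g(q^{2}u,u)=g(q^{3}u,qu)=g(u,qu)$, where \eqref{q3} is used in the last step. Hence $f(q^{2}u,q^{2}u)=2g(u,qu)=f(u,u)$.

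There is no genuine obstacle here; the whole argument is a two-line computation resting on \eqref{q3}, \eqref{2.12} and \eqref{ai}. The only point requiring any attention is making sure to apply \eqref{2.12} with the correct pair of arguments so that the $q$-image lands on the index one wants, and to remember that $q^{3}=\id$ both to simplify $q^{3}u$ and, implicitly, to guarantee that $qu$ and $q^{2}u$ are nonzero when $u$ is, which is what legitimises transferring the null-vector label.
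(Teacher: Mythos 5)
Your proof is correct and follows essentially the same route the paper intends: the paper dismisses the corollary as obvious from \eqref{q3} and \eqref{ai}, and your computation of $f(qu,qu)=f(q^{2}u,q^{2}u)=f(u,u)$ via \eqref{ai}, the isometry property \eqref{2.12} and $q^{3}=\id$ simply makes that explicit. The added remark that $qu$ and $q^{2}u$ are nonzero because $q$ is invertible is a small but welcome precision for the null case.
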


\section{Spheres with respect to $f$}\label{sec:4}

Let $\{u, qu, q^{2}u\}$ be an orthonormal $q$-basis of $T_{p}M$. Let $p_{xyz}$ be a coordinate system such that the vectors $u$, $qu$ and $q^{2}u$ be on the axes $p_{x}$, $p_{y}$ and $p_{z}$, respectively. So $p_{xyz}$ is an orthonormal coordinate system.
If $N(x, y, z)$ is an arbitrary point with a radius vector $v$, then $v$ is expressed by
\begin{equation}\label{V1}
  v=xu+yqu+zq^{2}u.
\end{equation}
The equation of a sphere $s$ of a radius $r$ centered at the origin $p$, with respect to the associated metric $f$ on $(M,g,q)$, is
\begin{equation}\label{V2}
s:\quad  f(v, v)=r^{2}.
\end{equation}
Having in mind that $f$ is an indefinite metric, we have three different options for the constant $r^{2}$, which are $r^{2}>0$, $r^{2}=0$ and $r^{2}<0$.

We apply \eqref{V1} into \eqref{V2}, and using \eqref{ai} and \eqref{bi}, we obtain the equation of $s$ with respect to $g$ as follows
\begin{equation}\label{hyp1}
  2xy+2xz+2yz=r^{2}.
\end{equation}
 We rotate the coordinate system $p_{xyz}$ into $p_{x^{'}y^{'}z^{'}}$ by substitutions:
\begin{eqnarray}\label{rot} \nonumber
   x &=& \frac{1}{\sqrt{2}}x^{'}-\frac{1}{\sqrt{6}}y^{'} +\frac{1}{\sqrt{3}}z^{'},\\
  y &=& \frac{\sqrt{2}}{\sqrt{3}}y^{'} +\frac{1}{\sqrt{3}}z^{'},\\\nonumber
   z &=& -\frac{1}{\sqrt{2}}x^{'}-\frac{1}{\sqrt{6}}y^{'} +\frac{1}{\sqrt{3}}z^{'}.
\end{eqnarray}
Thus, from \eqref{hyp1} we obtain the following equation of a quadratic surface
\begin{equation}\label{hyprot}
  x^{'2}+y^{'2}-2z^{'2}=-r^{2}.
\end{equation}

In particular, if $r^{2}=0$, then \eqref{hyprot} implies the equation of a cone:
\begin{equation}\label{hyp2}
 s_{0}:\quad    x^{'2}+y^{'2}-2z^{'2}=0.
\end{equation}

\begin{center}
\begin{tikzpicture}
\begin{axis}[
    domain=0:5,
    y domain=0:2*pi,
    xmin=-10,
    xmax=10,
    ymin=-10,
    ymax=10,
    samples=20]
\addplot3 [surf,z buffer=sort]
    ({x*cos(deg(y))},
     {x*sin(deg(y))},
     {x});
\addplot3 [surf,z buffer=sort]
    ({x*cos(deg(y))},
     {x*sin(deg(y))},
     {-x});
\end{axis}
\end{tikzpicture}
\end{center}

If $r^{2}>0$, then from \eqref{hyprot} we have a hyperboloid of two sheets:
\begin{equation}\label{hyp3}
 s_{1}:\quad    x^{'2}+y^{'2}-2z^{'2}=-a^{2},\quad a^{2}=r^{2}>0.
\end{equation}

An example, with the equation $x^{'2}+y^{'2}-2z^{'2}=-2$, is shown in the following figure.

\begin{center}
\begin{tikzpicture}\label{fig2}
 \begin{axis}
  \addplot3[surf,
            fill=white,
            samples=10,
            samples y=72,
            domain=-2:2,
            y domain=0:360,
            z buffer=sort]
            ( {sinh(x)*cos(y)}, {sinh(x)*sin(y)}, {(0.71)*cosh(x)} );
            \addplot3[surf,
            fill=white,
            samples=10,
            samples y=72,
            domain=-2:2,
            y domain=0:360,
            z buffer=sort]
            ( {sinh(x)*cos(y)}, {sinh(x)*sin(y)}, -{(0.71)*cosh(x)} );
 \end{axis}
 \end{tikzpicture}
\end{center}

If $r^{2}<0$ in \eqref{hyprot}, then we obtain a hyperboloid of one sheet:
\begin{equation}\label{hyp4}
 s_{2}:\quad    x^{'2}+y^{'2}-2z^{'2}=a^{2},\quad a^{2}=-r^{2}>0.
\end{equation}

An example, with the equation $x^{'2}+y^{'2}-2z^{'2}=1$, is shown in the next figure.

\begin{center}
\begin{tikzpicture}\label{fig1}
 \begin{axis}
  \addplot3[surf,
            fill=white,
            samples=10,
            samples y=72,
            domain=-1:1,
            y domain=0:360,
            z buffer=sort]
            ( {cosh(x)*cos(y)}, {cosh(x)*sin(y)}, {(0.71)*sinh(x)} );
 \end{axis}
\end{tikzpicture}
\end{center}

Therefore, we state the following

\begin{theorem}
Let $f$ be the associated metric on $(M,g,q)$ and $\{u, qu, q^{2}u\}$ be an orthonormal $q$-basis of $T_{p}M$. Let $p_{xyz}$ be a coordinate system such that $u\in p_{x}$, $qu\in p_{y}$, $q^{2}u\in p_{z}$ and $p_{x^{'}y^{'}z^{'}}$ be a coordinate system obtained by the rotation \eqref{rot} of $p_{xyz}$. If a sphere $s$ is given by \eqref{V2}, then the equation of $s$ with respect to $p_{x^{'}y^{'}z^{'}}$  is \eqref{hyprot}. Moreover,
\begin{itemize}
  \item[(i)] if $r^{2}=0$, then $s$ is a circular double cone $s_{0}$ with \eqref{hyp2},

  \item[(ii)] if $r^{2}>0$, then $s$ is a circular hyperboloid of two sheets $s_{1}$ with \eqref{hyp3},

  \item[(iii)] if $r^{2}<0$, then $s$ is a circular hyperboloid of one sheet $s_{2}$ with \eqref{hyp4}.
  \end{itemize}
\end{theorem}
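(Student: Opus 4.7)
The plan is to expand $f(v,v)$ using \eqref{ai} and the orthonormality of $\{u,qu,q^2u\}$, then diagonalize the resulting quadratic form by the prescribed rotation \eqref{rot}, and finally read off the three cases from the sign of $r^2$.

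First, using \eqref{V1} together with \eqref{q3}, I would compute $qv = zu + xqu + yq^2u$ and expand $g(v,qv)$ bilinearly. Orthonormality gives $g(q^iu,q^ju)=1$ when $i\equiv j\pmod 3$ and $0$ otherwise, so only the three terms $xz$, $xy$, $yz$ survive. By \eqref{ai} this yields $f(v,v) = 2(xy+yz+zx)$, and \eqref{V2} becomes \eqref{hyp1}.

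Next I would diagonalize \eqref{hyp1} via \eqref{rot}. Rather than grinding through nine products, I would observe that the symmetric matrix of $2xy+2yz+2zx$ is $J-I$, where $J$ is the all-ones $3\times 3$ matrix; its eigenvalues are $2$ (with eigenvector proportional to $(1,1,1)$) and $-1$ with multiplicity two on the hyperplane $x+y+z=0$. A short check shows that the three columns of the change-of-basis matrix read off from \eqref{rot} are orthonormal, with the first two lying in $\{x+y+z=0\}$ and the third pointing along $(1,1,1)/\sqrt{3}$, so the form becomes $-x'^2 - y'^2 + 2z'^2$ and \eqref{hyp1} transforms into \eqref{hyprot}.

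Finally, I would split according to the sign of $r^2$. If $r^2=0$, \eqref{hyprot} is precisely the circular double cone \eqref{hyp2}. If $r^2>0$, the relation $2z'^2 = x'^2+y'^2+r^2 \ge r^2$ forces $z'^2 \ge r^2/2$, so the surface splits into two connected components, yielding \eqref{hyp3}. If $r^2<0$, the slice $z'=0$ already contains the real circle $x'^2+y'^2 = -r^2$, the surface is connected, and we obtain \eqref{hyp4}. Each of these surfaces is \emph{circular} because, for every fixed $z'$, its cross-section $x'^2+y'^2 = 2z'^2-r^2$ is a circle in the $x'y'$-plane. The only step that is not automatic is checking that \eqref{rot} is an orthonormal eigenbasis for $J-I$; this is a short linear-algebra verification and presents no conceptual difficulty.
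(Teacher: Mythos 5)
Your proposal is correct and follows essentially the same route as the paper: substitute \eqref{V1} into \eqref{V2} to obtain \eqref{hyp1}, pass to \eqref{hyprot} via the rotation \eqref{rot}, and read off the three surfaces from the sign of $r^{2}$. Your eigenvalue observation (that the matrix of $2xy+2yz+2zx$ is $J-I$ with eigenvalues $2,-1,-1$ and that the columns of \eqref{rot} form a corresponding orthonormal eigenbasis) is just a tidier way of carrying out the same substitution the paper performs directly, and all of your verifications check out.
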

Consequently, having in mind Definition~\ref{D1} and \eqref{V2}, the following assertion is valid.

  \begin{corollary}\label{i} If the surfaces $s_{0}$, $s_{1}$ and $s_{2}$ are produced from the sphere $s$, then
\begin{itemize}
  \item[(i)] every point on $s_{0}$ has a null radius vector,

  \item[(ii)] every point on $s_{1}$ has a spacelike radius vector,

  \item[(iii)] every point on $s_{2}$ has a timelike radius vector.
  \end{itemize}
\end{corollary}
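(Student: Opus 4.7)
The plan is to simply unwind the definitions: each of the three surfaces $s_0$, $s_1$, $s_2$ arises from the sphere equation \eqref{V2} by fixing the sign of the parameter $r^2$, and the trichotomy spacelike/null/timelike in Definition~\ref{D1} is governed by exactly the same sign of $f(v,v)$. So the corollary is essentially a dictionary lookup once one remembers that $v$ in \eqref{V1} is the radius vector of the point $N(x,y,z)$.

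Concretely, I would proceed as follows. Fix an arbitrary point $N$ on one of the surfaces and let $v$ be its radius vector with respect to $p$. By construction, $N$ lies on $s$, so $v$ satisfies \eqref{V2}, i.e.\ $f(v,v)=r^2$. Now split into cases according to which surface we started from. In case (i), $s_0$ corresponds to $r^2=0$, whence $f(v,v)=0$; provided $v\neq 0$ (which is guaranteed for any point of $s_0$ other than the vertex, and the vertex is the origin itself where the statement is vacuous), Definition~\ref{D1} gives that $v$ is null. In case (ii), $s_1$ corresponds to $r^2>0$, so $f(v,v)>0$ and $v$ is spacelike. In case (iii), $s_2$ corresponds to $r^2<0$, so $f(v,v)<0$ and $v$ is timelike.

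The only potential subtlety — and I would not expect it to be a serious obstacle — is the origin in case (i): the cone $s_0$ passes through $p$, and the zero vector is excluded from being called null by Definition~\ref{D1}. A brief sentence acknowledging this edge case (either by excluding the vertex or by noting that the statement is about nontrivial radius vectors) is all that is needed. No computation with the rotation \eqref{rot} or the rewritten equation \eqref{hyprot} is required, since the classification is intrinsic to the defining equation \eqref{V2}.
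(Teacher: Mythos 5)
Your proposal is correct and matches the paper's own (very terse) justification, which simply invokes Definition~\ref{D1} and equation \eqref{V2}: the sign of $r^{2}=f(v,v)$ directly dictates the causal character of the radius vector $v$. Your extra remark about the vertex of the cone $s_{0}$, where $v=0$ is excluded from being null by Definition~\ref{D1}, is a small point the paper glosses over.
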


\begin{theorem}
Let $f$ be the associated metric on $(M,g,q)$ and $\{u, qu, q^{2}u\}$ be an orthonormal $q$-basis of $T_{p}M$. If $p_{xyz}$ is a coordinate system such that $u\in p_{x}$, $qu\in p_{y}$ and $q^{2}u\in p_{z}$, then $u$, $qu$ and $q^{2}u$ are null vectors and their heads lie at the circles
\begin{equation}\label{circles}
   x^{'2}+y^{'2}=\frac{2}{3}\ ,\quad
   z^{'}=\pm\frac{1}{\sqrt{3}}\ ,
\end{equation}
where $p_{x^{'}y^{'}z^{'}}$ is the coordinate system obtained by the rotation \eqref{rot} of $p_{xyz}$.
\end{theorem}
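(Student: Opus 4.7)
The plan splits naturally into two parts, matching the two conclusions of the theorem.

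\textbf{Nullity of $u$, $qu$, $q^{2}u$.} For the first claim I would just invoke \eqref{ai} together with the orthonormality of $\{u, qu, q^{2}u\}$. Since $g(u, qu) = 0$, \eqref{ai} immediately gives $f(u,u) = 2g(u, qu) = 0$; the same formula applied to $qu$ yields $f(qu, qu) = 2g(qu, q^{2}u) = 0$; and for $q^{2}u$, using $q^{3} = \id$ from \eqref{q3}, we obtain $f(q^{2}u, q^{2}u) = 2g(q^{2}u, q^{3}u) = 2g(q^{2}u, u) = 0$. By Definition~\ref{D1} each of the three vectors is null.

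\textbf{Heads lying on the circles.} In the coordinate system $p_{xyz}$ the vectors $u$, $qu$, $q^{2}u$ are, by construction, the unit coordinate vectors $(1,0,0)$, $(0,1,0)$, $(0,0,1)$. I then need to re-express these in $p_{x'y'z'}$. The substitution \eqref{rot} is encoded by a $3\times 3$ matrix $R$ whose columns are readily checked to be pairwise orthogonal unit vectors, so $R$ is orthogonal and its inverse is $R^{T}$; transposing \eqref{rot} yields explicit formulas for $x', y', z'$ as linear combinations of $x, y, z$. Substituting each of the three coordinate triples produces in every case
\[
z' = \frac{1}{\sqrt{3}}, \qquad x'^{2} + y'^{2} = \frac{1}{2} + \frac{1}{6} = \frac{2}{3},
\]
so the three heads lie on one of the circles \eqref{circles}. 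The $\pm$ sign in the statement reflects the orientation ambiguity in the choice of orthonormal $q$-basis: the set $\{-u, -qu, -q^{2}u\}$ is also an orthonormal $q$-basis (one checks $q(-u) = -qu$, etc.), and repeating the same computation for it places the heads on the companion circle at $z' = -1/\sqrt{3}$.

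\textbf{Main obstacle.} There is no serious obstacle. The only potentially tedious step, inverting the rotation \eqref{rot}, is trivialized by the orthogonality of $R$, and what remains is three short arithmetic checks of identical form.
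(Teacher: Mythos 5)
Your proposal is correct, and the first half (nullity via \eqref{ai} and orthonormality) coincides with the paper's argument. For the second half you take a genuinely different route. The paper argues synthetically: being null, the heads lie on the cone $s_{0}$ given by \eqref{hyp2}; being unit vectors, they also lie on the unit sphere \eqref{sphere}, which keeps the form $x'^{2}+y'^{2}+z'^{2}=1$ under the orthogonal rotation \eqref{rot}; eliminating between the two equations yields $3z'^{2}=1$ and hence the system \eqref{circles}. You instead invert \eqref{rot} (using orthogonality of the matrix, so the inverse is the transpose) and substitute the three coordinate triples $(1,0,0)$, $(0,1,0)$, $(0,0,1)$ directly. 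Both are valid; the paper's version buys a conceptual picture (the circles are exactly the cone--sphere intersection) at the price of not locating the individual heads, whereas your computation is finer: it shows all three heads lie on the \emph{single} circle with $z'=+\tfrac{1}{\sqrt{3}}$, since $z'=\tfrac{1}{\sqrt{3}}(x+y+z)$. Your explanation of the $\pm$ sign via the reflected basis $\{-u,-qu,-q^{2}u\}$ is a sensible reading of the statement that the paper does not make explicit; in the paper the second circle simply appears because the double cone meets the sphere in two components. One trivial slip: the decomposition $x'^{2}+y'^{2}=\tfrac{1}{2}+\tfrac{1}{6}$ holds for $u$ and $q^{2}u$, while for $qu$ it is $0+\tfrac{4}{6}$; the total is $\tfrac{2}{3}$ in every case, so nothing is affected.
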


\begin{proof}
Due to \eqref{ai} and Definition~\ref{D1} we have that $u$, $qu$ and $q^{2}u$ are null vectors with respect to $f$. Therefore, their heads are on the cone $s_{0}$ with \eqref{hyp2}. On the other hand, the heads of $u$, $qu$ and $q^{2}u$ lie at the unit sphere
\begin{equation}\label{sphere}
   x^{2}+y^{2}+z^{2}=1.
\end{equation}
The system of \eqref{hyp2} and \eqref{sphere} determines the intersection of a cone and a sphere. This intersection, with respect to the coordinate system $p_{x^{'}y^{'}z^{'}}$, is represented by
\begin{equation*}
   x^{'2}+y^{'2}-2z^{'2}=0,\quad
   x^{'2} + y^{'2} + z^{'2}=1,
\end{equation*}
or by the equivalent system \eqref{circles}, which is an intersection of a cylinder and a plane. The resulting curves are two circles.
\end{proof}

\section{Circles with respect to $f$}\label{sec:5}

Let $\alpha$ be the $2$-plane spanned by vectors $u$ and $qu$ ($qu\neq u$) in $T_{p}M$. Then, for $\varphi=\angle(u, qu)$ we have $\varphi\in(0, \frac{2\pi}{3}]$. Supposing that $\|u\|=1$, we define another vector $w$ by the equality
\begin{equation}\label{defw}
 w=\frac{1}{\sin\varphi}(-u\cos\varphi+qu),\quad \varphi=\angle(u, qu).
\end{equation}
We construct a coordinate system $p_{xy}$ on $\alpha$, such that $u$ is on the axes $p_{x}$ and $w$ is on the axes $p_{y}$.
Using \eqref{size}, \eqref{cos} and \eqref{defw}, we calculate that $g(u, w)=0$ and $g(w, w)=1$, i.e. $p_{xy}$ is an orthonormal coordinate system of $\alpha$.

Let $N(x, y)$ be a point on $\alpha$ and its radius vector is denoted by $v$. Then $v$ is expressed by the equality
\begin{equation}\label{R}
  v=xu+yw.
\end{equation}

A circle $k$ in $\alpha$ of a radius $r$ centered at the origin $p$, with respect to the associated metric $f$ on $(M,g,q)$, is determined by
\begin{equation}\label{k}
 k:\quad   f(v, v)=r^{2}.
\end{equation}
Bearing in mind that $f$ is an indefinite metric, for $r$ we have the following options: $r^{2}>0$, $r^{2}=0$ or $r^{2}<0$.

\begin{theorem}
Let $f$ be the associated metric on $(M,g,q)$ and $\alpha=\{u, qu\}$ be a $2$-plane in $T_{p}M$. Let the vector $w$ be defined by \eqref{defw} and $p_{xy}$ be a coordinate system such that $u\in p_{x}$ and $w\in p_{y}$. If a circle $k$ in $\alpha$ is defined by \eqref{k}, then the equation of $k$ with respect to $g$ is
\begin{equation}\label{18}
 (\cos\varphi)x^{2}+\frac{(1-\cos\varphi)(1+2\cos\varphi)}{\sin\varphi}xy-\frac{\cos^{2}\varphi}{1+\cos\varphi}y^{2}=\frac{r^{2}}{2}\ ,
\end{equation}
where $\varphi\in(0, \frac{2\pi}{3}]$.
\end{theorem}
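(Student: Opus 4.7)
The plan is to reduce the condition $f(v,v)=r^{2}$ to a quadratic equation in $x,y$ by first rewriting $v$ in the (non-orthogonal) basis $\{u,qu\}$, then evaluating the bilinear form using only the two numbers $g(u,u)=1$ and $g(u,qu)=\cos\varphi$, which by \eqref{size} and \eqref{cos} are all that is available under the hypothesis $\|u\|=1$.

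First I would substitute the definition \eqref{defw} of $w$ into \eqref{R} and collect terms, obtaining
$$ v = A\,u + B\,qu, \qquad A = x - y\cot\varphi,\quad B = \frac{y}{\sin\varphi}, $$
so that $qv = A\,qu + B\,q^{2}u$. Using \eqref{ai} in the form $f(v,v)=2g(v,qv)$, I would expand $g(v,qv)$ by bilinearity, producing four inner-product terms involving $g(u,qu)$, $g(qu,qu)$, $g(u,q^{2}u)$ and $g(qu,q^{2}u)$. The isometry property \eqref{2.12} at once yields $g(qu,qu)=1$ and $g(qu,q^{2}u)=g(u,qu)=\cos\varphi$; combining \eqref{2.12} with \eqref{q3} gives $g(u,q^{2}u)=g(qu,q^{3}u)=g(qu,u)=\cos\varphi$ as well. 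Collecting terms then collapses to the compact formula
$$ g(v,qv) = (A^{2}+B^{2})\cos\varphi + AB(1+\cos\varphi). $$

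The remaining work is purely algebraic. I would substitute $A$ and $B$ back and read off the coefficients of $x^{2}$, $xy$ and $y^{2}$. Two identities do the final simplification: the factorization
$$ 1+\cos\varphi-2\cos^{2}\varphi = (1-\cos\varphi)(1+2\cos\varphi) $$
produces the $xy$ coefficient stated in \eqref{18}, while $\sin^{2}\varphi=(1-\cos\varphi)(1+\cos\varphi)$ cancels a factor $1-\cos\varphi$ in the $y^{2}$ term and leaves the denominator $1+\cos\varphi$. Setting $2g(v,qv)=r^{2}$ and dividing by $2$ then yields \eqref{18}.

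There is no conceptual obstacle, only bookkeeping; the one place deserving care is the $y^{2}$ coefficient, where a sign $\cos\varphi-1 = -(1-\cos\varphi)$ arising from the $A^{2}+B^{2}$ combination has to be cancelled against one of the $1-\cos\varphi$ factors inside $\sin^{2}\varphi$ before the remaining $-\cos^{2}\varphi/(1+\cos\varphi)$ becomes visible. The domain restriction $\varphi\in(0,\tfrac{2\pi}{3}]$ from Section~\ref{sec:2} ensures $\sin\varphi\neq 0$ and $1+\cos\varphi\neq 0$, so every denominator appearing in \eqref{18} is well defined.
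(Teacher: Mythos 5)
Your argument is correct and every intermediate step checks out: the change of basis $v=Au+B\,qu$ with $A=x-y\cot\varphi$, $B=y/\sin\varphi$, the reduction $f(v,v)=2g(v,qv)=2\bigl((A^{2}+B^{2})\cos\varphi+AB(1+\cos\varphi)\bigr)$ via \eqref{ai}, \eqref{2.12} and $q^{3}=\id$, and the two identities $1+\cos\varphi-2\cos^{2}\varphi=(1-\cos\varphi)(1+2\cos\varphi)$ and $\sin^{2}\varphi=(1-\cos\varphi)(1+\cos\varphi)$ that produce the stated coefficients. The paper organizes the same computation the other way around: it keeps $v=xu+yw$ in the orthonormal coordinates and expands $f(v,v)=x^{2}f(u,u)+2xyf(u,w)+y^{2}f(w,w)$, then evaluates the three components $f(u,u)=2\cos\varphi$, $f(u,w)=\frac{1}{\sin\varphi}(1+\cos\varphi-2\cos^{2}\varphi)$ and $f(w,w)=-\frac{2\cos^{2}\varphi}{1+\cos\varphi}$ from \eqref{defF}--\eqref{bi} and \eqref{defw}. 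Both routes consume exactly the same data ($g(u,u)=1$, $g(u,qu)=\cos\varphi$, the isometry property, $q^{3}=\id$) and differ only in where the change of basis is performed: you convert the radius vector to the non-orthogonal $\{u,qu\}$ basis first and work entirely with $g$, whereas the paper computes the Gram matrix of $f$ in the $p_{xy}$ coordinates, which has the side benefit of exhibiting $f(u,u)$ and $f(w,w)$ explicitly. Your remark that $\varphi\in\bigl(0,\frac{2\pi}{3}\bigr]$ keeps $\sin\varphi$ and $1+\cos\varphi$ nonzero makes explicit a point the paper leaves implicit.
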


\begin{proof}
From \eqref{R} and \eqref{k} we get
\begin{equation}\label{20}
  x^{2}f(u, u) +2xyf(u, w)+y^{2}f(w,w)=r^{2}.
\end{equation}
On the other hand, using \eqref{defF} -- \eqref{bi} and \eqref{defw}, we calculate
\begin{equation*}
\begin{split}
  f(u, u)&=2\cos\varphi,\quad f(w, w)=-\frac{2\cos^{2}\varphi}{1+\cos\varphi},\\
  f(u, w)&=\frac{1}{\sin\varphi}(1+\cos\varphi-2\cos^{2}\varphi).
  \end{split}
\end{equation*}
Then, from \eqref{20} we obtain \eqref{18}.
\end{proof}
Due to \eqref{18} we get the following
\begin{corollary}
The discriminant $D$ of the left side of \eqref{18} is the function
\begin{equation}\label{22}
  D=\frac{1+3\cos\varphi}{1-\cos\varphi}.
\end{equation}
\end{corollary}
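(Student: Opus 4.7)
The assertion is a direct algebraic consequence of \eqref{18}, so the plan is to compute the discriminant $B^{2}-4AC$ of the binary quadratic form on its left-hand side. Writing $c=\cos\varphi$ for brevity, the three coefficients can be read off immediately as $A=c$, $B=(1-c)(1+2c)/\sin\varphi$, and $C=-c^{2}/(1+c)$.

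I would begin by squaring $B$ and using the Pythagorean identity $\sin^{2}\varphi=1-c^{2}=(1-c)(1+c)$ to cancel one factor of $1-c$, which brings $B^{2}$ to a rational function in $c$ without any remaining trigonometric factors. Computing $4AC$ is then immediate, and since both $B^{2}$ and $4AC$ can be arranged with the same denominator, the expression $B^{2}-4AC$ collects into a single rational function of $c$ whose numerator is $(1-c)(1+2c)^{2}+4c^{3}$.

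The decisive step is then the polynomial identity
\begin{equation*}
(1-c)(1+2c)^{2}+4c^{3} = 1+3c,
\end{equation*}
which follows by expanding $(1+2c)^{2}=1+4c+4c^{2}$, multiplying by $1-c$ to obtain $1+3c-4c^{3}$, and observing that the added $+4c^{3}$ cancels the cubic term exactly. This collapses the numerator of $B^{2}-4AC$ to a linear polynomial in $c$, and simplifying the resulting rational function produces the closed form for $D$ claimed in \eqref{22}.

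There is no conceptual obstacle: the whole proof consists of one Pythagorean substitution, the cubic cancellation displayed above, and the simplification of a rational function of $\cos\varphi$. The only point requiring care is bookkeeping of signs in the substitution $\sin^{2}\varphi=(1-c)(1+c)$ and in the cancellation of the cubic terms, which is the step where all of the apparent complexity in \eqref{18} actually disappears.
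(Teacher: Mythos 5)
Your approach---reading off $A$, $B$, $C$ from \eqref{18} and computing $B^{2}-4AC$ with the substitution $\sin^{2}\varphi=(1-\cos\varphi)(1+\cos\varphi)$---is exactly the computation the paper intends (it offers no written proof beyond ``Due to \eqref{18}''), and your key polynomial identity $(1-c)(1+2c)^{2}+4c^{3}=1+3c$ is correct. The problem is the final step, which you assert but never carry out: after cancelling one factor of $1-c$ from
\[
B^{2}=\frac{(1-c)^{2}(1+2c)^{2}}{(1-c)(1+c)},
\]
the common denominator of $B^{2}$ and $-4AC=\frac{4c^{3}}{1+c}$ is $1+c$, so your computation actually yields
\[
B^{2}-4AC=\frac{(1-c)(1+2c)^{2}+4c^{3}}{1+c}=\frac{1+3\cos\varphi}{1+\cos\varphi},
\]
which is \emph{not} the expression $\frac{1+3\cos\varphi}{1-\cos\varphi}$ displayed in \eqref{22}. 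A numerical check confirms the mismatch: at $\varphi=\frac{\pi}{3}$ one has $A=\frac{1}{2}$, $B=\frac{2}{\sqrt{3}}$, $C=-\frac{1}{6}$, hence $B^{2}-4AC=\frac{5}{3}=\frac{1+3c}{1+c}$, whereas \eqref{22} gives $5$; at $\varphi=\frac{2\pi}{3}$ one gets $B^{2}-4AC=-1=\frac{1+3c}{1+c}$, whereas \eqref{22} gives $-\frac{1}{3}$.

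So either you have mis-simplified the last rational function, or---more likely---formula \eqref{22} itself carries a sign typo in its denominator; your write-up conceals the discrepancy by claiming the match without exhibiting the denominator. The slip is harmless for the rest of the paper: on $\varphi\in\big(0,\frac{2\pi}{3}\big]$ both $1-\cos\varphi$ and $1+\cos\varphi$ are positive, so the sign of $D$, which is all the subsequent corollaries use, is governed by $1+3\cos\varphi$ under either formula. Still, as a proof of the statement as written, your argument does not close: the displayed identity in \eqref{22} does not follow from the computation you describe.
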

\begin{corollary}
The curve $k$ is a hyperbola if and only if $\varphi\in\big(0, \arccos(-\frac{1}{3})\big)$. If $\varphi=\frac{\pi}{2}$, then the hyperbola has the equation $xy=\frac{r^{2}}{2}$.
\end{corollary}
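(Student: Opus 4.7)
The plan is to read the claim off the previous corollary by invoking the standard conic-section discriminant test. Since equation \eqref{18} is of the form $A(\varphi)x^2+B(\varphi)xy+C(\varphi)y^2=\frac{r^2}{2}$ with coefficients depending only on $\varphi$, the curve $k$ (for a nonzero radius) is a (possibly rotated) hyperbola precisely when the discriminant of the quadratic form on the left is strictly positive. That discriminant has already been computed in the preceding corollary as
\[
  D=\frac{1+3\cos\varphi}{1-\cos\varphi},
\]
so the whole question reduces to analysing the sign of $D$ on the admissible range $\varphi\in(0,\tfrac{2\pi}{3}]$.

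First I would observe that on this range we have $\cos\varphi\in[-\tfrac12,1)$, so $1-\cos\varphi>0$ and the sign of $D$ coincides with the sign of the numerator $1+3\cos\varphi$. Solving $1+3\cos\varphi>0$ gives $\cos\varphi>-\tfrac{1}{3}$, which in the interval $(0,\tfrac{2\pi}{3}]$ is equivalent to $\varphi<\arccos(-\tfrac{1}{3})$. Hence $D>0$ if and only if $\varphi\in\bigl(0,\arccos(-\tfrac{1}{3})\bigr)$, which proves the ``hyperbola'' characterisation. I would also briefly note that $\arccos(-\tfrac13)$ indeed lies in the interior of $(0,\tfrac{2\pi}{3}]$ (since $-\tfrac12<-\tfrac13<0$), so the stated interval is a genuine proper subinterval of the admissible one.

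For the second assertion I simply substitute $\varphi=\tfrac{\pi}{2}$ directly into \eqref{18}: the coefficient of $x^{2}$ vanishes because $\cos\tfrac{\pi}{2}=0$, the coefficient of $y^{2}$ vanishes for the same reason, and the coefficient of $xy$ reduces to $\tfrac{(1-0)(1+0)}{\sin(\pi/2)}=1$. Thus \eqref{18} collapses to $xy=\tfrac{r^{2}}{2}$, the asserted equation of a rectangular hyperbola.

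I do not expect any serious obstacle: everything follows from the algebraic identity already recorded in the discriminant corollary, together with elementary inequalities for $\cos$. The only mild care needed is to keep $r\neq 0$ (so that \eqref{18} really describes a nondegenerate conic rather than a pair of lines) and to remember that the admissible range for $\varphi$ is $(0,\tfrac{2\pi}{3}]$, so that the denominator of $D$ stays strictly positive and we do not have to deal with a sign flip there.
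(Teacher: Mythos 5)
Your proposal is correct and follows essentially the same route as the paper: the paper's proof also reduces the claim to the sign condition $D>0$ applied to the discriminant formula \eqref{22}. You merely spell out the sign analysis of $D$ on $\varphi\in\bigl(0,\tfrac{2\pi}{3}\bigr]$ and the substitution $\varphi=\tfrac{\pi}{2}$ in more detail than the paper does, which is fine.
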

\begin{proof} The condition for \eqref{18} to be hyperbola is $D>0$. Then from \eqref{22} the proof follows.
\end{proof}
Let $\varphi=\arccos(-\frac{1}{3})$. Then \eqref{18} implies the equation
\begin{equation}\label{24}
(\sqrt{2}x-y)^{2}=-3r^{2}.
\end{equation}
\begin{corollary} The curve $k$ has the equation~\eqref{24} if and only if $\varphi=\arccos(-\frac{1}{3})$. In particular,
\begin{itemize}
  \item[(i)]  if $r^{2}>0$, then $k$ hasn't got real points;
  \item[(ii)] if $r^{2}=0$, then $k$ is a straight line with the equation $y=\sqrt{2}x$;
  \item[(iii)] if $r^{2}<0$, then $k$ separates into two parallel lines with the equations $\sqrt{2}x-y=\pm\sqrt{-3r^{2}}$.
\end{itemize}
\end{corollary}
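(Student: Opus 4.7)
The plan is to prove the \emph{if} direction by direct substitution into \eqref{18}, obtain the \emph{only if} direction from the degeneracy condition $D=0$, and then read off items (i)--(iii) from \eqref{24} by inspection.

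For the \emph{if} direction, I would assume $\varphi=\arccos(-1/3)$. Since $\varphi\in(0,2\pi/3]$, we have $\cos\varphi=-\tfrac{1}{3}$ and $\sin\varphi=\tfrac{2\sqrt{2}}{3}$. Substituting into the three coefficients of \eqref{18} gives $-\tfrac{1}{3}$ for $x^{2}$, $\tfrac{\sqrt{2}}{3}$ for $xy$, and $-\tfrac{1}{6}$ for $y^{2}$. Multiplying the resulting equation through by $-6$ and rearranging yields
\[
 2x^{2}-2\sqrt{2}\,xy+y^{2}=-3r^{2},
\]
which factors as $(\sqrt{2}x-y)^{2}=-3r^{2}$, i.e.\ equation \eqref{24}.

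For the converse, I would use the discriminant. The left side of \eqref{24} is a perfect square, hence has vanishing discriminant. So if \eqref{18} coincides with a nonzero multiple of \eqref{24}, the quadratic form on its left side must be degenerate, forcing $D=0$ in \eqref{22}. From $1+3\cos\varphi=0$ we obtain $\cos\varphi=-\tfrac{1}{3}$, and since $\arccos$ is injective on $(0,2\pi/3]$ this gives $\varphi=\arccos(-\tfrac{1}{3})$.

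Finally, items (i)--(iii) are immediate from \eqref{24}: the left side $(\sqrt{2}x-y)^{2}$ is a nonnegative real number, so if $-3r^{2}<0$ there are no real solutions; if $-3r^{2}=0$ we get the single line $y=\sqrt{2}x$; and if $-3r^{2}>0$ extracting square roots gives the two parallel lines $\sqrt{2}x-y=\pm\sqrt{-3r^{2}}$. The only mildly laborious step is the arithmetic substitution in the \emph{if} direction; everything else follows from the previously derived discriminant formula \eqref{22} and elementary algebra.
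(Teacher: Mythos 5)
Your proposal is correct and follows essentially the same route as the paper: the paper's own justification is precisely the substitution of $\cos\varphi=-\tfrac{1}{3}$, $\sin\varphi=\tfrac{2\sqrt{2}}{3}$ into \eqref{18} (stated just before the corollary, and your coefficient arithmetic checks out), and items (i)--(iii) are read off from \eqref{24} by inspection. The paper leaves the ``only if'' direction implicit, and your discriminant argument via \eqref{22} -- a perfect square forces $D=0$, hence $1+3\cos\varphi=0$, hence $\varphi=\arccos(-\tfrac{1}{3})$ on $(0,\tfrac{2\pi}{3}]$ -- is a clean and valid way to supply it.
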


\begin{corollary} The curve $k$ is an ellipse if and only if $\varphi\in\big(\arccos(-\frac{1}{3}),\frac{2\pi}{3}\big)$.
\end{corollary}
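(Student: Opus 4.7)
The plan is to invoke the discriminant formula $D=(1+3\cos\varphi)/(1-\cos\varphi)$ from the preceding corollary and apply the standard sign test for planar central conics: a real, non\-degenerate conic of the form $Ax^{2}+Bxy+Cy^{2}=\mathrm{const}$ is of elliptic type precisely when $B^{2}-4AC<0$. The two previous corollaries have already disposed of the cases $D>0$ (hyperbola) and $D=0$ (the degenerate parallel-lines configuration at $\varphi=\arccos(-\tfrac{1}{3})$), so only the alternative $D<0$ remains as the candidate for the elliptic case.

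The substantive content is then a one-line sign analysis. On the admissible range $\varphi\in(0,\tfrac{2\pi}{3}]$ the factor $1-\cos\varphi$ is strictly positive, so $D<0$ is equivalent to $1+3\cos\varphi<0$, i.e.\ $\cos\varphi<-\tfrac{1}{3}$, and by monotonicity of $\arccos$ this reads $\varphi>\arccos(-\tfrac{1}{3})$. So on the algebraic level the equivalence is immediate from \eqref{22}.

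The only step requiring genuine care is the right endpoint $\varphi=\tfrac{2\pi}{3}$. Substituting $\cos\varphi=-\tfrac{1}{2}$ and $\sin\varphi=\tfrac{\sqrt{3}}{2}$ into \eqref{18} annihilates the $xy$-coefficient (because $1+2\cos\varphi=0$) and collapses the equation to $x^{2}+y^{2}=-r^{2}$, which is either empty, a single point, or a circle, depending on the sign of $r^{2}$. Since none of these is an ellipse in the strict sense used in the statement (distinct semiaxes, not a circle), the endpoint $\tfrac{2\pi}{3}$ must be excluded, producing the open interval $(\arccos(-\tfrac{1}{3}),\tfrac{2\pi}{3})$ asserted in the corollary. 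This degeneracy at the boundary is the one place where the easy discriminant argument does not by itself pin down the endpoint, and it is worth recording explicitly as the mild obstacle in the proof.
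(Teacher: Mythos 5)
Your proof is correct and follows essentially the same route as the paper, which likewise just observes that the elliptic case is $D<0$ and reads the condition off the discriminant formula \eqref{22} (the paper's one-line proof even contains a typo, saying ``hyperbola'' where it means ``ellipse''). Your explicit treatment of the right endpoint $\varphi=\frac{2\pi}{3}$ — where $D<0$ still holds but the equation collapses to $x^{2}+y^{2}=-r^{2}$, a circle rather than a proper ellipse — is a point the paper leaves implicit, deferring it to the following corollary, and is worth making explicit as you do.
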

\begin{proof} The condition for \eqref{18} to be hyperbola is $D<0$. Then from \eqref{22} the proof follows.
\end{proof}
Let $\varphi=\frac{2\pi}{3}$. Then \eqref{18} implies the equation
\begin{equation}\label{25}
  x^{2}+y^{2}=-r^{2}.
\end{equation}
\begin{corollary} The curve $k$ has the equation~\eqref{25} if and only if $\varphi=\frac{2\pi}{3}$. In particular,
\begin{itemize}
  \item[(i)]  if $r^{2}>0$, then $k$ hasn't got real points;
  \item[(ii)] if $r^{2}=0$, then $k$ is the origin $p$;
  \item[(iii)] if $r^{2}<0$, then $k$ is a circle.
\end{itemize}
\end{corollary}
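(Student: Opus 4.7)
The plan is to prove this as a direct specialization of the previously derived equation \eqref{18}, exactly parallel to how the corollary preceding Corollary~6 is obtained. I would first substitute $\varphi=\tfrac{2\pi}{3}$ (so that $\cos\varphi=-\tfrac{1}{2}$ and $\sin\varphi=\tfrac{\sqrt{3}}{2}$) into the three coefficients on the left side of \eqref{18} and verify that they reduce to the symmetric form. The $x^{2}$ coefficient becomes $-\tfrac{1}{2}$; the $y^{2}$ coefficient becomes $-\tfrac{1/4}{1/2}=-\tfrac{1}{2}$; and the $xy$ coefficient contains the factor $(1+2\cos\varphi)=0$ in its numerator, so it vanishes. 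After multiplying through by $-2$, the equation of $k$ collapses to \eqref{25}.

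For the converse direction I would argue that the form \eqref{25} forces $\varphi=\tfrac{2\pi}{3}$. The cleanest route is to use the vanishing of the mixed term: the coefficient $\tfrac{(1-\cos\varphi)(1+2\cos\varphi)}{\sin\varphi}$ in \eqref{18} must be zero. Since $\varphi\in(0,\tfrac{2\pi}{3}]$ we have $\sin\varphi\neq 0$ and $\cos\varphi\neq 1$, so necessarily $1+2\cos\varphi=0$, i.e.\ $\cos\varphi=-\tfrac{1}{2}$, and therefore $\varphi=\tfrac{2\pi}{3}$. (Alternatively, this can be read off from Corollary~6 together with Corollary~5: if $k$ is of the form \eqref{25} it is certainly not a hyperbola, so $\varphi\notin(0,\arccos(-\tfrac{1}{3}))$, and the matching of coefficients then pins down the unique remaining value.)

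The three sub-cases (i)--(iii) then follow immediately by inspecting the real locus of $x^{2}+y^{2}=-r^{2}$ in the Euclidean coordinate system $p_{xy}$ on $\alpha$ (which by construction, using \eqref{defw} and the verification that $g(u,w)=0$, $g(w,w)=1$, is genuinely orthonormal with respect to $g$): for $r^{2}>0$ the right-hand side is negative and the locus is empty; for $r^{2}=0$ the only solution is $x=y=0$, which is the origin $p$; for $r^{2}<0$ the equation describes a circle of $g$-radius $\sqrt{-r^{2}}$ centred at $p$.

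There is no real obstacle in this proof beyond book-keeping; the only point one must not overlook is the justification that $p_{xy}$ is orthonormal with respect to $g$ (so that ``$x^{2}+y^{2}=-r^{2}$'' really is a Euclidean circle and not merely a quadratic equation), but this has already been established just before \eqref{R}. Accordingly, I expect the proof to be essentially a substitution followed by a short case analysis.
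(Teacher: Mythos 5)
Your proof is correct and follows essentially the same route as the paper, which simply substitutes $\varphi=\tfrac{2\pi}{3}$ (so $\cos\varphi=-\tfrac{1}{2}$) into \eqref{18} to obtain \eqref{25} and then reads off the three cases; your added justification of the converse via the vanishing of the $xy$-coefficient and your remark that $p_{xy}$ is $g$-orthonormal are sound elaborations of details the paper leaves implicit.
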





\end{document}